\documentclass[11pt]{amsart}

\usepackage[article]{robertpreamble}
\usepackage{geometry}
\begin{document}


\title{Nested nodal loops for sums of Laplace eigenfunctions}
\author{Robert Koirala}
\address{Department of Mathematics, University of California San Diego}
\email{rkoirala@ucsd.edu}
\date{\today}

\begin{abstract}
    We study nested loops in zero sets of sums of Laplace eigenfunctions on closed surfaces. In the real-analytic category, answering a question of Logunov, we prove a uniform bound for the number of rooted double nests in terms of the surface, the root, and the spectral cutoff. We show that this analyticity hypothesis is sharp: on a smooth sphere, a linear combination of eigenfunctions with eigenvalues \(0\) and \(2\) can have infinitely many rooted double nests. We also answer a question of Logunov and Nadirashvili by constructing a planar biharmonic function whose nodal set contains a double nest, and we prove a quantitative bound for entire biharmonic functions of polynomial growth. The biharmonic construction gives a nodal-set manifestation of the failure of the Boggio--Hadamard conjecture from the 1900s.
\end{abstract}

\maketitle 


\section{Introduction}

Let \((M^2,g)\) be a closed Riemannian surface, and let \(\phi,\psi\) be two Laplace eigenfunctions:
\begin{align}
    -\Delta_g\phi=\lambda\phi,\qquad -\Delta_g\psi=\mu\psi .
\end{align}
We study the topology of the nodal set \(u^{-1}(0)\) of a nontrivial linear combination \(u=a\phi+b\psi\).

\begin{question}[Logunov]\label{question:surface}
    Can the nodal set \(u^{-1}(0)\) of a linear combination \(u=a\phi+b\psi\) contain arbitrarily many pairs of disjoint nested nodal loops?
\end{question}

Here a nodal loop means a connected component of \(u^{-1}(0)\) homeomorphic to \(S^1\). Informally, two loops are nested if one lies inside the disk bounded by the other, as in
\begin{tikzpicture}[baseline=-0.5ex,scale=0.7]
    \draw (0,0) circle (0.25);
    \draw (0,0) circle (0.10);
\end{tikzpicture}.
See Definitions~\ref{definition:nested-loops} and \ref{definition:nested-loops-surface}.

The motivation for Question~\ref{question:surface} is to understand which features of classical nodal geometry survive after one passes from a single eigenfunction to a finite spectral sum. For a single eigenfunction, the nodal set is subject to strong topological and geometric constraints. Courant's nodal domain theorem bounds the number of nodal domains of a \(k\)-th eigenfunction by \(k\) \cite{MR65391}; see also \cite{MR4704545} for a local version of Courant's theorem. Cheng's theorem shows that, on a closed surface, the nodal set of an eigenfunction has the structure of a finite topological graph \cite{MR397805}. Further finiteness results for level and nodal sets of eigenfunctions, including bounds for their Betti numbers, have been studied by Lin--Liu \cite{lin-liu2016-betti-numbers-of-level-sets}. See also \cite{hallgren2025structure} and the references therein for recent geometric and measure-theoretic developments.

The situation changes substantially for linear combinations of eigenfunctions. Some analytic estimates, especially quantitative unique continuation and volume estimates for nodal sets, extend to finite spectral sums \cite{MR1205487}. The corresponding topological theory, however, is much more fragile. The extended Courant property, which would bound the number of nodal domains of a linear combination of the first \(n\) eigenfunctions by \(n\), is false in general. Even more strikingly, B\'erard--Charron--Helffer and Buhovsky--Logunov--Sodin constructed smooth closed surfaces and eigenfunctions \(\Phi\) for which the nodal sets of \(\Phi-1\) have infinitely many connected components \cite{MR4467996,MR3890963,MR4060310,MR4082305,MR4281621,MR4190398}. Thus, when \(\lambda\neq\mu\), the function \(u=a\phi+b\psi\) need not inherit the topological restrictions available for a single eigenfunction.

Question~\ref{question:surface} asks whether a more rigid topological feature might nevertheless remain controlled. Rather than counting all components, it counts pairs of nested nodal loops. One source of hope for such rigidity is that \(u\) is annihilated by the fourth-order operator \((-\Delta_g-\lambda)(-\Delta_g-\mu).\) Thus, if one rescales \(u\) near a point \(p\) by setting \(U_\rho(y)=u(\exp_p(\rho y)),\) then, in the limit \(\rho\to0\), the leading local model is the planar biharmonic equation
\begin{align}
    \Delta\Delta U=0.
\end{align}
Planar biharmonic functions, which are more accessible than global spectral sums, therefore provide a natural local model for small-scale nodal configurations of sums of two eigenfunctions. This suggests the following Euclidean model problem.

\begin{question}[Logunov and Nadirashvili]\label{question:Euclidean-counter-example}
    Let \(u\) solve \(\Delta\Delta u=0\) on \(\mathbb R^2\). Can the nodal set
    \(u^{-1}(0)\) have a pair of nested loops?
\end{question}
This question is also related to the classical Boggio--Hadamard conjecture on positivity of Green's functions for the biharmonic operator, which would imply a maximum principle \cite{Boggio1901, Boggio1905,Hadamard1908, MR1696190}. The conjecture is guided by the intuition that a clamped plate pushed downward should bend downward everywhere. The connection with nested ovals is as follows. Suppose that a biharmonic function has a regular isolated outer nodal oval \(\gamma\), and let \(\Omega\) be the disk bounded by \(\gamma\). After changing sign if necessary, the function is negative in the inward collar of \(\partial\Omega\). Hence
\begin{align}
    \Delta^2 u=0 \quad \text{in }\Omega,\qquad u=0,
    \qquad \partial_\nu u\le0 \quad \text{on }\partial\Omega,
\end{align}
where \(\nu\) denotes the inward normal. If a Boggio--Hadamard-type maximum principle held in this setting, it would force \(u<0\) in \(\Omega\) and would therefore exclude an inner nodal oval. The same argument rules out double nests for planar harmonic functions. However, the Boggio--Hadamard conjecture was disproved by Duffin and Garabedian, with an elementary elliptic counterexample later given by Shapiro--Tegmark \cite{MR29021, MR46440, MR1267051}. See also \cite{MR2591975,MR1696190}. Since the biharmonic maximum principle fails in general, it is not a priori clear whether biharmonicity alone forbids a double nest.
\begin{figure}[!httb]
    \centering
    \includegraphics[width=0.75\linewidth]{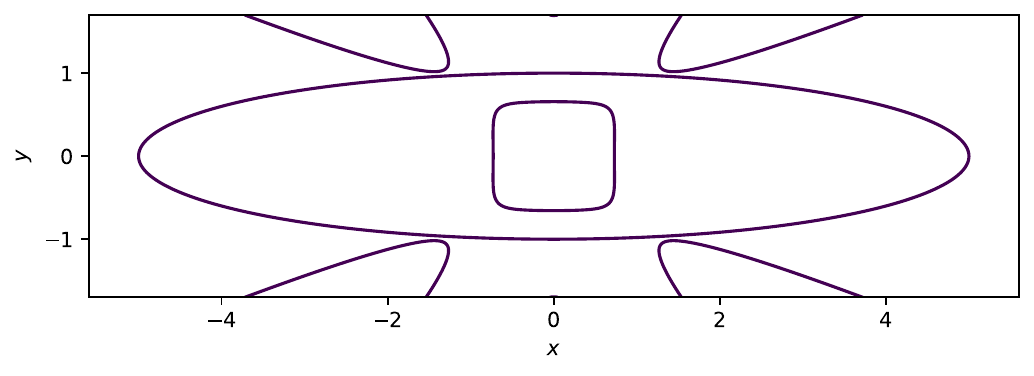}
    \caption{Nodal set of the biharmonic polynomial \(u(x,y)=Q(x,y)P(x,y)\), where \(P\) and \(Q\) are defined in \eqref{eq:biharmonic-counterexample}. The nodal set contains an outer ellipse and an inner closed loop, giving a double nest.}
    \label{fig:biharmonic-counterexample}
\end{figure}

The model problem is already delicate. If a biharmonic function \(u\) vanishes on a circle \(\partial B_R\), then Almansi's decomposition \(u(x,y)=h_0(x,y)+(x^2+y^2)h_1(x,y)\) and the maximum principle for harmonic functions imply
\begin{align*}
    u=((x^2+y^2)-R^2)h_1 \quad\text{in }B_R,
\end{align*}
with \(h_0,h_1\) harmonic. Since \(x^2+y^2<R^2\) in \(B_R\), any inner nodal oval would have to come from the zero set of \(h_1\). This is impossible for nontrivial harmonic \(h_1\), by the maximum principle. Thus a circular outer nodal loop cannot enclose another nodal oval. An ellipse, however, breaks this rigidity. A straightforward computation shows that \(u=QP\) is biharmonic, where
\begin{align}\label{eq:biharmonic-counterexample}
    \begin{split}
        Q(x,y)&=25-x^2-25y^2,\\
        P(x,y)&=8254531x^6-54879201x^4y^2+24763593x^4 +36742485x^2y^4\\
        &\qquad-12556530x^2y^2-1856167y^6-556035y^4 +19758600y^2-8254531.
    \end{split}
\end{align}
The nodal set of this polynomial contains a double nest, as illustrated in Figure~\ref{fig:biharmonic-counterexample}. Thus the simplest Euclidean biharmonic model does allow nested nodal loops. Moreover, the construction can be replicated. Given pairwise disjoint closed disks \(D_1,\dots,D_N\subset B(0,R)\), define a biharmonic function \(v\) near their union by placing translated and rescaled copies of \(u\) on the disks \(D_i\). The Lax--Malgrange--Runge approximation theorem then gives an entire biharmonic function \(u_N\) that approximates \(v\) sufficiently well on these disks. Since the relevant nodal loops of \(u\) are regular, \(u_N\) has at least \(N\) double nests. Thus entire biharmonic functions can have arbitrarily many double nests if no growth condition is imposed.

The main results below show that two kinds of restrictions recover finiteness. First, polynomial growth bounds the number of double nests for entire biharmonic functions. Second, real analyticity bounds the number of rooted double nests for finite spectral sums on closed surfaces. The analytic hypothesis is sharp: in the smooth category, even a shift of a single eigenfunction on \(S^2\) can have infinitely many rooted double nests.

We begin by fixing the notions of nesting used in the statements of the main results.

\begin{definition}\label{definition:nested-loops}
    Let \(u\) be a continuous function on \(\R^2\). A \emph{nodal oval} of \(u\) is a connected component of \(u^{-1}(0)\) that is homeomorphic to \(S^1\). If \(\gamma\) is such an oval, let \(D(\gamma)\) denote the bounded component of \(\R^2\setminus\gamma\). A \emph{double nest} is an ordered pair \((\gamma_{\rm in},\gamma_{\rm out})\) of distinct nodal ovals such that \(\overline{D(\gamma_{\rm in})}\subset D(\gamma_{\rm out})\). We denote the number of such pairs by \(N_{\mathrm{dn}}(u)\).
\end{definition}

\begin{definition}\label{definition:nested-loops-surface}
    Let \(M\) be a closed surface, let \(p\in M\), and let \(u\) be a continuous function on \(M\). A nodal oval \(\gamma\subset u^{-1}(0)\) is called a \emph{\(p\)-oval} if \(\gamma\subset M\setminus\{p\}\) and one component of \(M\setminus\gamma\) is an open disk whose closure does not contain \(p\). We denote this disk by \(D_p(\gamma)\) and call it the \emph{inside} of \(\gamma\) relative to \(p\). Two \(p\)-ovals form a \emph{\(p\)-rooted double nest} if \(\overline{D_p(\gamma_{\rm in})}\subset D_p(\gamma_{\rm out})\). We denote the number of \(p\)-rooted double nests by \(N_{\mathrm{dn}}^p(u)\).
\end{definition}

\begin{remark}
    The root is essential on a closed surface. On \(S^2\), every Jordan curve bounds two disks, so without a fixed root any two disjoint ovals can be declared nested by choosing the side of the outer oval after the fact. Fixing \(p\) is the closed-surface analogue of fixing the point at infinity in \(\R^2\simeq S^2\setminus\{p\}\).
\end{remark}

Although the example in \eqref{eq:biharmonic-counterexample} shows that double nests occur in the Euclidean biharmonic setting, our first result proves that polynomial growth prevents arbitrarily many of them.

\begin{theorem}\label{thm:plane-growth}
    Let \(u\not\equiv0\) be an entire biharmonic function on \(\R^2\), that is, \(\Delta\Delta u=0.\) Assume that, for some \(d\ge0\) and \(C<\infty\),
    \begin{align}
        |u(x,y)|\le C(1+x^2+y^2)^{d/2}.
    \end{align}
    Then, with \(m\coloneqq \floor{d}\), the number of double nests of the nodal set of \(u\) is bounded by
    \begin{align}
        N_{\mathrm{dn}}(u)\le \binom{\frac{m(m-1)}{2}}{2}.
    \end{align}
\end{theorem}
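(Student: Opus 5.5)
The plan is to reduce to real algebraic geometry. First show that a biharmonic function of polynomial growth is a polynomial of degree at most \(m=\lfloor d\rfloor\). Then bound the number of nodal ovals of a degree-\(m\) polynomial by \(m(m-1)/2\) using Harnack's curve theorem. Finally, note that a double nest is an ordered pair of distinct ovals, and each unordered pair of ovals yields at most one double nest. The latter holds because \(\overline{D(\gamma_1)}\subset D(\gamma_2)\) and \(\overline{D(\gamma_2)}\subset D(\gamma_1)\) cannot both hold. Hence \(N_{\mathrm{dn}}(u)\le\binom{\#\text{ovals}}{2}\le\binom{m(m-1)/2}{2}\).

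\textbf{Step 1 (Liouville).} Write \(u=h_0+(x^2+y^2)h_1\) with \(h_0,h_1\) harmonic on \(\mathbb R^2\) (Almansi). To transfer growth to \(h_0,h_1\), I would use the mean value property on circles. Averaging \(u\) over \(\partial B_r(p)\) gives \(h_0(p)+(|p|^2+r^2)h_1(p)\). Comparing two radii isolates \(h_1(p)\) and then \(h_0(p)\), with bounds of polynomial growth in \(|p|\).

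A cleaner route is to expand \(u\) in polar Fourier modes. Each mode \(u_k(r)e^{ik\theta}\) solves an Euler ODE with solutions \(r^{\pm k},r^{\pm k+2}\) (plus logarithms for \(k=0,1\)). Regularity at \(0\) leaves \(r^{|k|},r^{|k|+2}\). The bound \(|u_k(r)|\le C(1+r^2)^{d/2}\) then forces \(u_k\equiv0\) unless the surviving powers are \(\le d\). So \(u\) is a real polynomial of degree \(\le m\). If \(m\le1\), then \(u\) is constant or affine and has no ovals, so the bound (which is \(0\)) holds.

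\textbf{Step 2 (counting ovals).} Let \(f\) be the square-free part of \(u\), of degree \(k\le m\); it has the same zero set as \(u\). Factor \(f=f_1\cdots f_s\) into real irreducible factors of degrees \(k_i\).

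Each nodal oval \(\gamma\) of \(u\) is a compact connected component of \(Z(f)=\bigcup_i Z(f_i)\). I would assign to \(\gamma\) some factor \(i\) and a connected component of \(Z(f_i)\) contained in \(\gamma\). Such a component is compact, and it does not meet the line at infinity of \(\mathbb{RP}^2\). Distinct ovals are disjoint, so this assignment is injective.

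Linear factors have no compact components. For \(k_i\ge2\), Harnack's theorem bounds the number of components of the projective curve \(Z(f_i)\) by \((k_i-1)(k_i-2)/2+1\le k_i(k_i-1)/2\); the inequality holds because \(k_i\ge2\). Harnack applies to irreducible, possibly singular curves via the genus bound. Summing gives
\[
\#\text{ovals}\le\sum_i \tfrac{k_i(k_i-1)}{2}\le\tfrac{k(k-1)}{2}\le\tfrac{m(m-1)}{2},
\]
using superadditivity of \(t\mapsto t(t-1)/2\).

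\textbf{Main obstacle.} I expect the delicate points to be the following. In Step 1, one must make the growth transfer rigorous, with the \(\log\) terms for \(k=0,1\) ruled out by regularity at the origin. In Step 2, one must justify Harnack's bound for singular irreducible real curves, and handle components that are isolated points rather than loops. Those only reduce the count, but the injection argument must be phrased for arbitrary compact components.
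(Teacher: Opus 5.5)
Your proof is correct, but Step~2 reaches the oval count by a different route than the paper.

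For Step~1, the paper uses interior estimates $|D^\alpha u(z)|\le C_\alpha R^{-|\alpha|}\sup_{B_R(z)}|u|$ and lets $R\to\infty$. Your polar Fourier analysis also works, and it shows $u=\sum_k(a_k+b_kr^2)r^{|k|}e^{ik\theta}$ explicitly. Your Almansi aside is wrong as written, though it is harmless since you do not rely on it. The circle mean of a biharmonic function is $u(p)+\frac{r^2}{4}\Delta u(p)$, so comparing radii isolates $h_1(p)+p\cdot\nabla h_1(p)=\frac14\Delta u(p)$, not $h_1(p)$.

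For Step~2, the paper avoids Harnack altogether. After a generic rotation making $\gcd(f,f_y)=1$, B\'ezout gives at most $n(n-1)$ real solutions of $f=f_y=0$. Each oval contains at least two of them: the points where $x|_\gamma$ is extremal, which are either singular or have vertical tangent. This gives $\#\text{ovals}\le m(m-1)/2$ with no irreducibility, genus, or singularity bookkeeping.

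Your route rests instead on the singular version of Harnack's bound, proved via the normalization. One uses $g=\frac{(k-1)(k-2)}{2}-\sum_P\delta_P$, has at most $g+1$ real circles upstairs, and notes that each real point not covered by them is singular with $\delta_P\ge1$. This is heavier machinery but finer per factor: it gives $\frac{(k_i-1)(k_i-2)}{2}+1$, which you then weaken to $k_i(k_i-1)/2$.

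One gap remains to close. The genus argument needs absolute irreducibility, and a real-irreducible factor can split over $\mathbb C$ as $f_i=c\,g\bar g$ (e.g.\ $x^2+y^2$). In that case $Z(f_i)\subset Z(g)\cap Z(\bar g)$ has at most $k_i^2/4\le k_i(k_i-1)/2$ points by B\'ezout, so your estimate survives. Alternatively, by Baire some $\gamma\cap Z(f_i)$ contains an arc, so you can always assign a non-degenerate component and ignore such factors.
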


Our second result proves uniform boundedness of rooted double nests in the real-analytic category.

\begin{theorem}\label{thm:analytic}
    Let \((M^2,g)\) be a closed real-analytic Riemannian surface, let \(p\in M\), and let \(\Lambda<\infty\). Then there exists a constant \(C=C(M,g,p,\Lambda)<\infty\) such that every nonzero \(u\in \bigoplus_{\lambda_j\le\Lambda} \ker(-\Delta_g-\lambda_j)\) satisfies
    \begin{align}
        N_{\mathrm{dn}}^p(u)\le C.
    \end{align}
    In particular, every nonzero linear combination of two eigenfunctions with eigenvalues at most \(\Lambda\) has uniformly bounded \(p\)-rooted double nests.
\end{theorem}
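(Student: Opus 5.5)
The plan is to use the compactness of the unit sphere of the finite-dimensional space \(V_\Lambda\coloneqq\bigoplus_{\lambda_j\le\Lambda}\ker(-\Delta_g-\lambda_j)\), together with real-analytic (subanalytic) geometry. Since \(M\) and \(g\) are real-analytic, every element of \(V_\Lambda\) is real-analytic on \(M\). Fix a basis \(\phi_1,\dots,\phi_n\) of \(V_\Lambda\), and consider the map
\begin{align*}
F\colon S^{n-1}\times M\to\mathbb R,\qquad F(a,x)=\sum_i a_i\phi_i(x).
\end{align*}
This map is real-analytic on a compact real-analytic manifold. Because \(N_{\mathrm{dn}}^p\) is invariant under scaling \(u\), it suffices to bound \(N_{\mathrm{dn}}^p(u_a)\) uniformly over \(a\in S^{n-1}\), where \(u_a=F(a,\cdot)\).

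First I bound the number of nodal ovals. The set \(Z=F^{-1}(0)\subset S^{n-1}\times M\) is compact and analytic, and the projection \(\pi\colon Z\to S^{n-1}\) is proper. By Hardt's triviality theorem for subanalytic maps (or, equivalently, by uniform finiteness of Betti numbers of fibers in an o-minimal structure such as \(\mathbb R_{\mathrm{an}}\)), there is a finite partition of \(S^{n-1}\) into subanalytic pieces over which \(\pi\) is topologically trivial. Consequently the number of connected components of the fibers \(u_a^{-1}(0)\) is bounded by a constant \(K=K(M,g,\Lambda)\). Each fiber is a compact analytic set of dimension at most \(1\), since \(u_a\not\equiv0\), so it is a finite graph.

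Next I pass from ovals to nests. Every \(p\)-oval is a connected component of \(u_a^{-1}(0)\), so there are at most \(K\) of them. A \(p\)-rooted double nest is an ordered pair of such components, hence
\begin{align*}
N_{\mathrm{dn}}^p(u_a)\le K(K-1).
\end{align*}
With this approach the root \(p\) enters only through the definition, and the bound is in fact independent of \(p\). That is consistent with the claim that \(C=C(M,g,p,\Lambda)\). The final assertion about two eigenfunctions follows because \(a\phi+b\psi\) lies in \(V_\Lambda\).

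The main obstacle is justifying the uniform bound on the number of components in the first step. I would cite Gabrielov/Hardt: a compact subanalytic family has finitely many topological types of fibers. Concretely, I would apply it to the relatively compact subanalytic set \(Z\subset S^{n-1}\times M\), after embedding \(M\) analytically into some \(\mathbb R^N\) (Grauert), and note that the number of connected components is a topological invariant of each fiber. If this seems too heavy, an alternative is Khovanskii/Gabrielov-type bounds for Pfaffian or analytic families. Either route gives a finite, noneffective \(C\). This is also exactly where analyticity is used, which matches the smooth counterexample on \(S^2\): there, component counts are not uniformly controlled.
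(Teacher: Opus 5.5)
Your proposal is correct and follows essentially the same route as the paper: you consider the real-analytic family \(F(a,x)\) over the unit sphere of the spectral space, apply Hardt triviality to the proper projection of the zero set to get finitely many fiber types and hence a uniform bound on connected components, and then count pairs of components. The only cosmetic difference is that you bound by ordered pairs, \(K(K-1)\). The paper instead observes that at most one ordering of two distinct \(p\)-ovals can be nested, which gives \(\binom{L_0}{2}\); either way the constant is finite.
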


The final result shows that the real-analyticity assumption in Theorem~\ref{thm:analytic} is sharp. In the smooth category, the conclusion fails on \(S^2\), and even for linear combinations of eigenfunctions with eigenvalues at most \(2\). However, the construction depends on the topology of \(S^2\) and does not seem to extend to other surfaces.

\begin{theorem}\label{thm:smooth-counterexample}
    Fix any point \(p\in S^2\). There exist a smooth Riemannian metric \(g\) on \(S^2\) and a Laplace eigenfunction \(\Phi\) of \((S^2,g)\), satisfying \(-\Delta_g\Phi=2\Phi,\) such that the nodal set of \(u\coloneqq \Phi-1\) contains infinitely many \(p\)-rooted double nests.
\end{theorem}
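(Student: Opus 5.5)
The construction will adapt the Bérard--Charron--Helffer / Buhovsky--Logunov--Sodin strategy for infinitely many nodal components of \(\Phi-1\). The idea is to build a metric on \(S^2\) as a warped product. The eigenfunction \(\Phi\) is the first zonal-type function, and its level set \(\{\Phi=1\}\) oscillates infinitely often near the pole \(p\), or near the antipode of \(p\).

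\textbf{Step 1: rotationally symmetric ansatz.} Take \(g=dr^2+f(r)^2d\theta^2\) on \(S^2\) with \(r\in[0,\pi]\), where \(f\) is smooth, positive on \((0,\pi)\), odd at the endpoints with \(f'(0)=1\), \(f'(\pi)=-1\). Look for \(\Phi=\Phi(r)\) solving \(-\frac{1}{f}(f\Phi')'=2\Phi\). In BLS the trick is to prescribe \(\Phi\) first and solve for the metric. Rearranging gives \((\log f)'=-(\Phi''+2\Phi)/\Phi'\), so \(f\) is determined wherever \(\Phi'\neq0\). Since \(\Phi\) is radial, the level set \(\{\Phi=1\}\) is a union of circles \(\{r=r_k\}\). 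A radial level set would only give concentric circles, and these are all nested around \(p\).

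\textbf{Step 2: the circles form \(p\)-rooted double nests.} Each circle \(\{r=r_k\}\) with \(0<r_k<\pi\) is a \(p\)-oval, placing \(p\) at \(r=0\). Its inside relative to \(p\) is the cap \(\{r>r_k\}\) containing the antipode. If \(\Phi-1\) has infinitely many isolated simple zeros \(r_k\) accumulating at some \(r_*\), then any two of them \(r_j<r_k\) satisfy \(\overline{D_p(\gamma_k)}\subset D_p(\gamma_j)\). This already yields infinitely many rooted double nests. We need each \(\{r=r_k\}\) to be a full connected component of the nodal set, which holds when the zeros of \(\Phi-1\) are simple. So the task reduces to constructing \(\Phi\) and \(f\) such that \(\Phi(r)-1\) has infinitely many simple zeros.

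\textbf{Step 3: choosing \(\Phi\).} Take \(\Phi\) to be a perturbation of the round first eigenfunction \(\cos r\) on the round sphere, where \(f=\sin r\) and the eigenvalue is \(2\). Rescale so that \(\Phi\) has maximum slightly above \(1\) at the antipode \(r=\pi\), say \(\Phi(r)\approx -c\cos r\). Near \(r=\pi\), modify \(\Phi\) so that \(\Phi-1\) equals a flat oscillating function such as \(e^{-1/s}\sin(1/s)\) with \(s=\pi-r\). This is still monotone enough, with \(\Phi'\ne0\) on \((0,\pi)\), provided the oscillation is added to a dominant strictly monotone term.

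\textbf{Main obstacle.} The main obstacle is verifying that the resulting \(f\) is smooth, positive, and closes up smoothly at both poles. Near the pole this requires \(-(\Phi''+2\Phi)/\Phi'\) to behave like \(1/s\) plus smooth odd corrections, even though \(\Phi'\) vanishes at the pole. A simple radial level set meets this only if \(\Phi\) is itself smooth and even in \(s\) at the pole, while still having \(\Phi-1\) change sign infinitely often. I would handle this by imposing the flat oscillation in the even variable \(s^2\). I would then check that the \(\log f\) correction is flat, hence smooth, and does not spoil \(f\sim s\). If the endpoint matching fails, the fallback is the BLS-style alternative of placing the oscillation at an interior accumulation radius \(r_*\). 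There, \(\Phi-1\) oscillates flatly around \(r_*\) with \(\Phi'(r_*)\neq0\), which avoids the pole regularity issue entirely.
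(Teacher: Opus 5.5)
\textbf{There is a genuine gap.} The approach fails at Step 1, for a reason more basic than the pole regularity you flag as the main obstacle: a rotationally symmetric eigenfunction with positive eigenvalue can never have \(\Phi-1\) vanishing infinitely often.

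For radial \(\Phi\) the equation reads \((f\Phi')'=-2f\Phi\). So on any interval where \(\Phi>0\), the function \(f\Phi'\) is strictly decreasing. Since \(f>0\), \(\Phi'\) changes sign at most once there. Hence \(\Phi\) is strictly increasing and then strictly decreasing, and it takes the value \(1\) at most twice. Near any accumulation point \(r_*\in[0,\pi]\) of zeros of \(\Phi-1\) one has \(\Phi\approx1>0\), so no accumulation can occur. This holds at a pole (oscillating in \(s^2\) does not help) and at an interior radius alike.

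Put differently, between consecutive zeros of \(\Phi-1\), Rolle's theorem produces local minima of \(\Phi\) with positive values. There \(\Phi'=0\) and \(\Phi''\ge0\), contradicting \(\Phi''=-2\Phi<0\). This is exactly where your formula \((\log f)'=-(\Phi''+2\Phi)/\Phi'\) breaks down.

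Your remedies are also self-defeating. Requiring \(\Phi'\neq0\) on \((0,\pi)\), or only near \(r_*\) as in your fallback, makes \(\Phi\) strictly monotone there, so \(\Phi-1\) has at most one zero.

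Conceptually, infinitely many concentric circles in \(\{\Phi=1\}\) would have infinite \(p\)-rooted depth. Lemma~\ref{lem:shifted-eigenfunction-finite-depth}, which rests on the same positive-interior-minimum argument, rules this out for any nonconstant eigenfunction with \(\lambda>0\) on any smooth closed surface. So the configuration you aim for in Step 2 is unattainable for every metric, symmetric or not.

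\textbf{Missing ideas.} Two things are missing: breaking the rotational symmetry, and obtaining nesting from the position of the root rather than from concentricity.

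The paper uses the Bérard--Charron--Helffer eigenfunction \(\Phi=T(\theta)\cos\varphi\) for a conformal metric \(Gg_0\). The factor \(\cos\varphi\) turns the equation for \(T\) into \(-\frac{1}{\sin\theta}(\sin\theta\,T')'+\frac{T}{\sin^2\theta}=2GT\). The extra term \(T/\sin^2\theta\) is what allows \(G>0\) while \(T-1\) is a flat oscillation with infinitely many simple zeros near the equator. The local minima of \(T\) are saddle points of \(\Phi\), not minima, so the maximum principle is not violated.

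The components of \(\{\Phi>1\}\) over the intervals where \(T>1\) are then disjoint disks \(U_j\) lying side by side, not nested. Their boundaries \(\gamma_j\) are smooth ovals and are components of \(\{\Phi=1\}\). Pulling back by a diffeomorphism that sends \(p\) into \(U_0\) gives \(D_p(\gamma_0)=S^2\setminus\overline{U_0}\), which contains every \(\overline{U_j}\) with \(j\ge1\). Hence each \((\gamma_j,\gamma_0)\) is a \(p\)-rooted double nest. These chains all share the outer oval \(\gamma_0\) and have length two, consistent with the finite-depth lemma.
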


\subsection*{Organization}

In Section~\ref{sec:euclidean-toy-model}, we prove Theorem~\ref{thm:plane-growth}. In Section~\ref{sec:analytic}, we prove Theorem~\ref{thm:analytic}. Finally, in Section~\ref{sec:counter-example}, we prove Theorem~\ref{thm:smooth-counterexample}.

\subsection*{Acknowledgment}

We are grateful to Bennett Chow for his support and encouragement. We thank Alexander Logunov for bringing the historical context of Question~\ref{question:Euclidean-counter-example} to our attention, and Bogdan Georgiev and Javier Gómez-Serrano for their interest in this work.

\section{Bounded number of nested loops in the Euclidean setting}\label{sec:euclidean-toy-model}

\begin{proof}[Proof of Theorem~\ref{thm:plane-growth}]
    Let \(m\coloneqq \lfloor d\rfloor\). We first use the standard Liouville argument for entire biharmonic functions with polynomial growth. By interior estimates for \(\Delta^2\), for every multi-index \(\alpha\) and every \(z\in\R^2\),
    \begin{align}
        |D^\alpha u(z)| \le C_\alpha R^{-|\alpha|} \sup_{B_R(z)} |u|.
    \end{align}
    Using the polynomial growth assumption and letting \(R\to\infty\), we find that \(D^\alpha u(z)=0\) whenever \(|\alpha|>d\). Hence all derivatives of order \(m+1\) vanish identically, so \(u\) is a polynomial of degree at most \(m\).

    It remains to bound the number of oval components of the real algebraic curve \(u=0\). Let \(f\) be the square-free part of \(u\). Then \(f\in\mathbb R[x,y]\), \(f^{-1}(0)=u^{-1}(0)\), and \(n\coloneqq \deg f\le \deg u\le m.\) Thus the nodal ovals of \(u\) are precisely the oval components of \(f^{-1}(0)\).

    If \(n\le1\), then \(f^{-1}(0)\) has no oval components, and there is nothing to prove. Assume therefore that \(n\ge2\). Since rotations preserve oval components and nesting relations, we may rotate the coordinates. We choose the rotation generically so that no irreducible component of the complex curve \(f=0\) is a vertical line. Equivalently, \(\gcd_{\mathbb C[x,y]}(f,f_y)=1\). Indeed, write \(f=\prod_i g_i\) as a product of distinct irreducible factors over \(\mathbb C\). If some \(g_i\) divides \(f_y\), then reducing the identity for \(f_y\) modulo \(g_i\) gives \(g_i\mid (g_i)_y.\) Since \(\deg (g_i)_y<\deg g_i\) unless \((g_i)_y\equiv0\), this can happen only when \(g_i\in\mathbb C[x]\). As \(g_i\) is irreducible over \(\mathbb C\), this means that \(g_i\) is a vertical complex line. A generic rotation avoids this possibility.

    By B\'ezout's theorem, the number of complex solutions of \(f(x,y)=0, f_y(x,y)=0,\) counted with multiplicity, is at most \(\deg(f)\deg(f_y)\le n(n-1)\le m(m-1).\) In particular,
    \begin{align}
        \#\{(x,y)\in\R^2:f(x,y)=0,\ f_y(x,y)=0\}
        \le m(m-1).
    \end{align}

    Now let \(\gamma\) be an oval component of \(f^{-1}(0)\). The function \(x|_\gamma\) attains a maximum and a minimum on \(\gamma\). These occur at two distinct points, since otherwise \(x\) would be constant on \(\gamma\), forcing \(\gamma\) to lie in a vertical line. At each of these two points, either the curve \(f=0\) is singular, in which case \(f_y=0\), or the curve is smooth, in which case the tangent line is vertical, and again \(f_y=0\). Therefore each oval component contributes at least two distinct real solutions of the system \(f=0, f_y=0.\) Distinct oval components contribute disjoint sets of such points. Hence the number of oval components is at most
    \begin{align}
        \frac12\#\{f=0,\ f_y=0\} \le \frac{m(m-1)}2.
    \end{align}
    Finally, every double nest is determined by a pair of distinct ovals. For two distinct ovals, at most one ordering can satisfy \(\overline{D(\gamma_{\rm in})}\subset D(\gamma_{\rm out}).\) Therefore
    \begin{align}
        N_{\mathrm{dn}}(u) \le \binom{L_m}{2}, \qquad L_m\coloneqq \frac{m(m-1)}2.
    \end{align}
    This is the claimed bound.
\end{proof}

\section{Bounded number of nested loops in the real-analytic category}\label{sec:analytic}

\begin{proof}[Proof of Theorem~\ref{thm:analytic}]
    Let
    \begin{align}
        E_\Lambda = \bigoplus_{\lambda_j\le \Lambda} \ker(-\Delta_g-\lambda_j).
    \end{align}
    If \(E_\Lambda=\{0\}\), there is nothing to prove. Otherwise, let \(N=\dim E_\Lambda\) and choose a basis \(e_1,\dots,e_N\) of \(E_\Lambda\). Since \(g\) is real analytic, analytic elliptic regularity implies that each \(e_j\) is real analytic. Because multiplying by a nonzero scalar does not change the nodal set, it is enough to consider normalized sums
    \begin{align}
        u_a(x)=\sum_{j=1}^N a_j e_j(x),\qquad a=(a_1,\dots,a_N)\in S^{N-1}.
    \end{align}
    Define \(F:M\times S^{N-1}\to\mathbb R\) by \(F(x,a)=u_a(x)\). Then \(F\) is real analytic, and hence \(\mathcal Z\coloneqq F^{-1}(0)\) is a compact subanalytic subset of \(M\times S^{N-1}\). Let \(\pi:\mathcal Z\to S^{N-1}\) be the restriction of the projection onto the second factor. This is a proper subanalytic map, and its fiber over \(a\) is naturally homeomorphic to \(u_a^{-1}(0)\). By Hardt triviality in the subanalytic category \cite{MR564475}, there is a finite partition
    \begin{align}
        S^{N-1}=A_1\sqcup\cdots\sqcup A_K
    \end{align}
    into subanalytic pieces such that \(\pi\) is topologically trivial over each \(A_k\). Consequently, only finitely many homeomorphism types occur among the compact fibers \(u_a^{-1}(0)\).

    For each \(a\), the fiber \(u_a^{-1}(0)\) is a compact subanalytic subset of the real-analytic surface \(M\). By the triangulation theorem for subanalytic sets \cite{MR454051}, it is homeomorphic to a finite simplicial complex. In particular, it has finitely many connected components. Since only finitely many fiber homeomorphism types occur as \(a\) varies, there exists a constant \(L_0<\infty\) such that every fiber \(u_a^{-1}(0)\) has at most \(L_0\) connected components.

    Every \(p\)-oval is, by definition, a connected component of \(u_a^{-1}(0)\). Hence \(u_a^{-1}(0)\) has at most \(L_0\) \(p\)-ovals. For two distinct \(p\)-ovals, at most one ordering can satisfy \(\overline{D_p(\gamma_{\rm in})} \subset D_p(\gamma_{\rm out}).\) Therefore
    \begin{align}
        N_{\mathrm{dn}}^p(u_a) \le \binom{L_0}{2}
    \end{align}
    for every \(a\in S^{N-1}\).

    Finally, every nonzero element of \(E_\Lambda\) is a nonzero scalar multiple of some \(u_a\). Thus the same estimate holds for every nonzero \(u\in E_\Lambda\). Taking \(C(M,g,p,\Lambda)=\binom{L_0}{2}\) proves the desired uniform bound.
\end{proof}

\begin{remark}
    The proof gives more than an oval-component bound. Since the fibers form a subanalytic family with only finitely many homeomorphism types, any topological count determined by the homeomorphism type of the nodal set is uniformly bounded. In particular, on the one-dimensional part of the nodal set, one obtains uniform bounds for embedded cycles and nested embedded cycles. The root \(p\) is used only to decide which pairs are counted as nested; it is not needed for finiteness.
\end{remark}

\section{Unboundedly many nested loops in the smooth category}\label{sec:counter-example}

\begin{proof}[Proof of Theorem~\ref{thm:smooth-counterexample}]
    We use the construction of B\'erard--Charron--Helffer \cite[Proposition 5.1]{MR4467996}. In spherical coordinates \((\theta,\varphi)\), they construct a smooth function
    \begin{align}
        \Phi(\theta,\varphi) = T(\theta)\cos\varphi
    \end{align}
    and a smooth positive conformal factor \(G\) such that, for the metric \(g_G=Gg_0\) on \(S^2\),
    \begin{align}
        -\Delta_{g_G}\Phi=2\Phi.
    \end{align}
    We recall only the part of their construction needed here; see Section~5 of \cite{MR4467996} for the full construction. On an equatorial interval \(\theta\sim \pi/2\), the function \(T\) is written as
    \begin{align}
        T(\theta)=V(\theta)+u_{\mathrm{osc}}(\theta),
    \end{align}
    where \(V\equiv 1\) on the support of the oscillatory term \(u_{\mathrm{osc}}\), and \(u_{\mathrm{osc}}\) is a nonzero constant multiple of a rescaled copy of
    \begin{align}
        v(t)= \exp\!\left(\frac{1}{(t+1)(t-1)}\right) \cos\!\left(\frac{1}{(t+1)(t-1)}\right), \qquad -1<t<1,
    \end{align}
    extended by zero outside \((-1,1)\). Consequently, on this support, \(T-1\) is a nonzero constant multiple of a rescaling of \(v\). The function \(v\) has infinitely many sign-changing zeros accumulating at \(t=\pm1\). Therefore \(T-1\) has infinitely many simple zeros in the equatorial band, and its sign alternates between successive simple zeros.
    
    We now construct infinitely many double nests. Choose infinitely many pairwise disjoint open intervals \(I_j=(a_j,b_j)\) between consecutive simple zeros such that
    \begin{align*}
        T(\theta)>1 \quad\text{for }\theta\in I_j, \qquad T(a_j)=T(b_j)=1.
    \end{align*}
     We choose them so that the closed intervals \([a_j,b_j]\) are pairwise disjoint. For each \(j\), define
    \begin{align}\label{eq:uj}
        U_j = \left\{ (\theta,\varphi): \theta\in I_j,\  |\varphi|<\arccos\frac{1}{T(\theta)} \right\},
    \end{align}
    where \(\varphi\) is taken in the coordinate interval \((-\pi,\pi]\). Since \(T>1\) on \(I_j\), the set \(U_j\) is nonempty. Since \(T\ge0\) and \(T\le1\) away from the intervals on which \(T>1\), the inequality \(T(\theta)\cos\varphi>1\) can hold only when \(T(\theta)>1\). Hence the sets \(U_j\) are precisely the connected components of the superlevel set \(\{\Phi>1\}\) arising from these positive intervals. In particular, each \(U_j\) is a topological disk and \(\gamma_j\coloneqq \partial U_j\) is contained in \(\{\Phi=1\}\).
    \begin{figure}[!htb]
        \centering
        \begin{tikzpicture}[scale=0.9,line cap=round,line join=round]
            \shade[ball color=gray!10,opacity=0.55] (0,0) circle (3);
            \draw[gray!45,line width=0.35pt] (0,0) circle (3);
            
            \filldraw[fill=gray!35,fill opacity=0.30,draw=black!75,line width=1.0pt]
            (1.20,0.58).. controls (1.55,1) and (2.52,0.89) .. (2.90,0.73)
              .. controls (2.55,0.49) and (1.52,0.3) .. (1.20,0.58)--cycle;
            \node at (2,0.66) {\(U_0\)};
    
            \filldraw[fill=gray!35,fill opacity=0.30,draw=black!75,line width=1.0pt]
            (1.05,0.12).. controls (1.55,0.42) and (2.68,0.45) .. (3,0.25)
              .. controls (2.65,-0.15) and (1.52,-0.15) .. (1.05,0.12)--cycle;
            \node at (2,0.15) {\(U_1\)};
            
            \filldraw[fill=gray!35,fill opacity=0.30,draw=black!75,line width=1.0pt]
            (1.18,-0.43).. controls (1.62,-0.05) and (2.66,-0.05) .. (2.98,-0.28)
              .. controls (2.60,-0.55) and (1.50,-0.7) .. (1.18,-0.43)--cycle;
            \node at (2,-0.36) {\(U_2\)};
        \end{tikzpicture}
        \caption{A schematic of the disks \(U_0,U_1,U_2\) defined in \eqref{eq:uj} on \(S^2\).}
        \label{fig:counter-example-on-sphere}
    \end{figure}
    Moreover, \(\gamma_j\) is a smooth embedded circle. Away from the two points \((a_j,0)\) and \((b_j,0)\), this follows from
    \begin{align}
        \partial_\varphi(T(\theta)\cos\varphi-1) = -T(\theta)\sin\varphi \neq0.
    \end{align}
    At the endpoints, we instead have
    \begin{align}
        \partial_\theta(T(\theta)\cos\varphi-1) = T'(\theta)\cos\varphi = T'(\theta),
    \end{align}
    which is nonzero because \(a_j\) and \(b_j\) are simple zeros of \(T-1\). Thus the level set is regular along \(\gamma_j\). Since the equation \(T(\theta)\cos\varphi=1\) cannot hold where \(T<1\), no level-set arc connects \(\gamma_j\) to another such oval through the intervening bands. Therefore each \(\gamma_j\) is a connected component of \(\{\Phi=1\}\). We have obtained infinitely many pairwise disjoint smooth ovals \(\gamma_j\subset\{\Phi=1\}\).
    
    Now fix the prescribed root \(p\in S^2\). Choose a point \(q\in U_0\), and let \(h:S^2\to S^2\) be a diffeomorphism such that \(h(p)=q\). Pull back the metric and the eigenfunction by setting \(\widetilde g=h^*g_G\) and \(\widetilde\Phi=\Phi\circ h\). By naturality of the Laplacian, \(-\Delta_{\widetilde g}\widetilde\Phi=2\widetilde\Phi\). The constant function \(1\) is a \(0\)-eigenfunction of \((S^2,\widetilde g)\). Therefore \(u\coloneqq \widetilde\Phi-1\) is a linear combination of eigenfunctions with eigenvalues \(2\) and \(0\).

    Define
    \begin{align}
        \widetilde U_j=h^{-1}(U_j), \qquad \widetilde\gamma_j=h^{-1}(\gamma_j).
    \end{align}
    Then each \(\widetilde\gamma_j\) is a connected component of \(u^{-1}(0)\), and \(\partial\widetilde U_j=\widetilde\gamma_j.\) Moreover, \(p\in\widetilde U_0\). Hence the \(p\)-inside of \(\widetilde\gamma_0\) is the other disk bounded by \(\widetilde\gamma_0\):
    \begin{align}
        D_p(\widetilde\gamma_0) = S^2\setminus \overline{\widetilde U_0}.
    \end{align}
    For every \(j\ge1\), the point \(p\) does not lie in \(\overline{\widetilde U_j}\), since the closures of the \(\widetilde U_j\)'s are pairwise disjoint and \(p\in\widetilde U_0\). Thus
    \begin{align}
        D_p(\widetilde\gamma_j)=\widetilde U_j, \qquad j\ge1.
    \end{align}
    Therefore, for every \(j\ge1\),
    \begin{align}
        \overline{D_p(\widetilde\gamma_j)} = \overline{\widetilde U_j} \subset S^2\setminus\overline{\widetilde U_0} = D_p(\widetilde\gamma_0).
    \end{align}
    Hence each pair \((\widetilde\gamma_j,\widetilde\gamma_0),j\ge1,\) is a \(p\)-rooted double nest. Since there are infinitely many such indices \(j\), the function \(u=\widetilde\Phi-1\) has infinitely many \(p\)-rooted double nests.
\end{proof}

\begin{remark}
    \begin{enumerate}
        \item The counterexample uses the constant eigenfunction. Thus Theorem~\ref{thm:smooth-counterexample} does not address the stricter variant in which both eigenfunctions are required to have positive eigenvalue.

        \item In the counterexample, \(\widetilde\gamma_0\) is common to all double nests. It remains open whether one can arrange infinitely many double nests with no oval appearing in two different nests. For instance, \begin{tikzpicture}[baseline=-0.5ex,scale=0.7]
            \draw (0,0) circle (0.25) circle (0.10) (1,0) circle (0.25) circle (0.10);
        \end{tikzpicture} would count.
    \end{enumerate}
\end{remark}

\subsection{A question about depth}
The construction above produces infinitely many rooted double nests, but it does not produce arbitrarily long chains of nesting. Here the \emph{depth} of a chain is the number of loops in it; for instance,
\begin{tikzpicture}[baseline=-0.5ex,scale=0.7]
    \draw (0,0) circle (0.25);
    \draw (0,0) circle (0.15);
    \draw (0,0) circle (0.09);
\end{tikzpicture} has depth \(3\).

\begin{definition}
    For \(p\)-ovals \(\gamma,\eta\), write \(\gamma\prec_p \eta\) if \(\overline{D_p(\gamma)}\subset D_p(\eta).\)
    The \textit{\(p\)-rooted depth of \(u^{-1}(0)\)} is the supremum of the lengths of chains \(\gamma_1\prec_p \gamma_2\prec_p\cdots\prec_p\gamma_m\) of \(p\)-ovals contained in \(u^{-1}(0)\).
\end{definition}

The next lemma gives a basic finite-depth result for shifted eigenfunctions. The bound is in terms of the numbers of positive and negative nodal domains of the eigenfunction, and hence is finite by Courant's theorem.

\begin{lemma}\label{lem:shifted-eigenfunction-finite-depth}
    Let \((M^2,g)\) be a closed connected smooth Riemannian surface, and let \(\psi\) be a nonconstant eigenfunction,
    \begin{align*}
        -\Delta_g\psi=\lambda\psi,\qquad \lambda>0.
    \end{align*}
    Then, for every \(c\in\mathbb R\) and every \(p\in M\), the level set \(\{\psi=c\}\), equivalently the nodal set of \(\psi-c\), has finite \(p\)-rooted depth.
\end{lemma}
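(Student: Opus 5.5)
The plan is to show that every chain \(\gamma_1\prec_p\gamma_2\prec_p\cdots\prec_p\gamma_m\) of \(p\)-ovals in \(\{\psi=c\}\) forces roughly \(m/2\) pairwise disjoint nodal domains of \(\psi\) of a fixed sign. Courant's theorem, which makes the number of nodal domains finite, then bounds \(m\).

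By replacing \(\psi\) with \(-\psi\) and \(c\) with \(-c\), we may assume \(c\ge0\). The nodal domains used will then be the negative ones, and the bound will read \(m\le 2N_-(\psi)\) (or \(2N_-+1\)). The case \(c<0\) gives the same statement with \(N_+\).

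\emph{Step 1 (regions).} Set \(R_1=D_p(\gamma_1)\) and \(R_{i}=D_p(\gamma_{i})\setminus\overline{D_p(\gamma_{i-1})}\) for \(2\le i\le m\). These are pairwise disjoint open sets. The boundary of each \(R_i\) is contained in \(\gamma_{i-1}\cup\gamma_i\subset\{\psi=c\}\). Consequently, any connected component of \(\{\psi<c\}\) that meets \(R_i\) is contained in \(R_i\), because a path leaving \(R_i\) must cross a point where \(\psi=c\).

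\emph{Step 2 (each oval sees \(\{\psi<c\}\) on some side).} Fix \(\gamma_i\). Suppose \(\psi\ge c\) on a neighborhood of \(\gamma_i\). Then every point of \(\gamma_i\) is a local minimum of \(\psi\), with value \(c\ge0\). If \(c>0\), then near \(\gamma_i\) we have \(\psi>0\), so \(-\Delta_g\psi=\lambda\psi>0\). Thus \(\psi\) is strictly superharmonic there and cannot have an interior local minimum, which is a contradiction. If \(c=0\), we instead use the Hopf lemma, or the fact that a nonnegative solution vanishing at an interior point vanishes identically, followed by unique continuation. Either way, \(\{\psi<c\}\) meets every neighborhood of \(\gamma_i\). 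Hence it meets \(R_i\) or \(R_{i+1}\), where for \(i=m\) the outer side is the complement of \(\overline{D_p(\gamma_m)}\), which we also allow as a region.

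\emph{Step 3 (a component of \(\{\psi<c\}\) contains a negative nodal domain).} Let \(\Omega\) be a component of \(\{\psi<c\}\) contained in some \(R_i\), so that \(\psi=c\) on \(\partial\Omega\).
\begin{itemize}
\item If \(\psi\ge0\) on \(\Omega\), then \(\psi\) is superharmonic on \(\Omega\) and satisfies \(\psi<c\) inside while \(\psi=c\) on the boundary. This contradicts the minimum principle.
\item Otherwise \(\psi<0\) somewhere in \(\Omega\). The negative nodal domain through that point is connected and lies in \(\{\psi<0\}\subset\{\psi<c\}\), so it is contained in \(\Omega\), and hence in \(R_i\).
\end{itemize}

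\emph{Conclusion.} Combining Steps 2 and 3, each oval \(\gamma_i\) is adjacent to a region that contains a negative nodal domain. Each region is adjacent to at most two ovals, and the regions are disjoint. Therefore at least \(\lceil m/2\rceil\) distinct negative nodal domains exist, so \(m\le 2N_-(\psi)\). The right-hand side is finite by Courant's theorem (or by compactness together with Cheng's structure theorem).

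The main obstacle I expect is Step 2 in the borderline case, together with making rigorous the local claim that "\(\psi\ge c\) on both sides of \(\gamma_i\)" means exactly that \(\gamma_i\) consists of local minima. The ovals are only known to be topological circles, so for \(c=0\) I would rely on the strong maximum principle for \(\Delta_g+\lambda\) applied to nonnegative solutions, rather than on any regularity of \(\gamma_i\).
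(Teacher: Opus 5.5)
Your proof is correct and is essentially the paper's argument. It uses the same three ingredients: the layers between consecutive ovals; the fact that an oval at level \(c>0\) cannot consist of local minima, since \(\Delta_g\psi=-\lambda c<0\) there, combined with the minimum principle forcing any part of \(\{\psi<c\}\) inside a layer to contain a negative nodal domain; and Courant's theorem for the count. The differences are minor: you pick a side for each oval, include the exterior region \(M\setminus\overline{D_p(\gamma_m)}\), and treat \(c=0\) the same way, which gives \(m\le 2N_-(\psi)\) instead of the paper's \(m\le 2N_-(\psi)+1\) (and \(m\le N_+(\psi)+N_-(\psi)\) when \(c=0\)).
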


\begin{proof}
    Let \(N_+(\psi)\) and \(N_-(\psi)\) denote the numbers of positive and negative nodal domains of \(\psi\). These numbers are finite by Courant's nodal domain theorem.
    
    Let \(\gamma_1\prec_p\gamma_2\prec_p\cdots\prec_p\gamma_m\) be a chain of \(p\)-ovals contained in \(\{\psi=c\}\). Set \(D_i\coloneqq D_p(\gamma_i).\) Thus
    \begin{align*}
        \overline{D_1}\subset D_2,\quad
        \overline{D_2}\subset D_3,\quad \ldots,\quad
        \overline{D_{m-1}}\subset D_m.
    \end{align*}
    Define the layers
    \begin{align*}
        A_1\coloneqq D_1,
        \qquad
        A_i\coloneqq D_i\setminus \overline{D_{i-1}},
        \quad 2\le i\le m.
    \end{align*}
    Each \(A_i\) is connected, and \(\partial A_i\subset \{\psi=c\}.\)
    
    First suppose \(c=0\). Since \(\psi\) cannot vanish identically on any open set, each \(A_i\) contains a point where \(\psi\ne0\). The nodal domain of \(\psi\) containing that point is contained in \(A_i\), because \(\partial A_i\subset\{\psi=0\}\). Distinct layers give distinct nodal domains. Hence
    \begin{align*}
        m\le N_+(\psi)+N_-(\psi).
    \end{align*}
    
    Now suppose \(c>0\). We claim that for each \(1\le i<m\), at least one of the two adjacent layers \(A_i,A_{i+1}\) contains a negative point of \(\psi\). Indeed, let \(A\) be one of the layers. If \(\psi\ge0\) in \(A\), then in fact \(\psi\ge c\) in \(A\). To see this, the strong maximum principle gives \(\psi>0\) in \(A\). If \(\psi<c\) somewhere in \(A\), then \(\psi\) attains a positive interior minimum, since \(\psi=c\) on \(\partial A\). At such a point, \(\Delta_g\psi\ge0,\) whereas the eigenfunction equation gives
    \begin{align*}
        \Delta_g\psi=-\lambda\psi<0,
    \end{align*}
    a contradiction. Thus \(\psi\ge c\) in \(A\).
    
    If neither \(A_i\) nor \(A_{i+1}\) contained a negative point, then the preceding paragraph would give
    \begin{align*}
        \psi\ge c
        \qquad\text{on } A_i\cup A_{i+1}.
    \end{align*}
    Since \(\psi=c\) on the intermediate oval \(\gamma_i\), every point of \(\gamma_i\) would be a local minimum point of \(\psi\) with value \(c>0\). This is impossible, because at such a point
    \begin{align*}
        \Delta_g\psi\ge0,
        \qquad\text{but}\qquad
        \Delta_g\psi=-\lambda c<0.
    \end{align*}
    The claim follows.
    
    Pair the layers as \((A_1,A_2),\ (A_3,A_4),\ldots.\) For each pair, the claim gives a layer containing a negative point. The negative nodal domain containing that point is contained in the chosen layer, because the boundary of every layer lies in \(\{\psi=c\}\) and \(c>0\). Since the chosen layers are pairwise disjoint, the corresponding negative nodal domains are distinct. Therefore
    \begin{align*}
        \left\lfloor \frac m2\right\rfloor\le N_-(\psi),
    \end{align*}
    and hence \(m\le 2N_-(\psi)+1.\)
    
    The case \(c<0\) is identical after reversing signs. Equivalently, apply the previous argument to \(-\psi\) and \(-c>0\). This gives \(m\le 2N_+(\psi)+1.\)
    
    In all cases the length \(m\) of the chain is bounded by a finite quantity. Thus \(\{\psi=c\}\) has finite \(p\)-rooted depth.
\end{proof}

\begin{remark}
    \begin{enumerate}
        \item It remains open whether one can prove finite depth for nodal sets of linear combinations of nonconstant eigenfunctions. One obstruction is the possible structure of connected components of common nodal sets.

        \item In a companion paper \cite{georgiev-serrano-koirala-logunov-inprep}, for any given \(N\), we construct a biharmonic polynomial with nested nodal loops of depth \(N\). This does not contradict Theorem~\ref{thm:plane-growth} because the latter is a statement for a fixed growth order, whereas the degrees, and hence the growth orders, of the polynomials in the companion construction are unbounded as \(N\to\infty\). 
    \end{enumerate}
        
\end{remark}


\bibliographystyle{amsalpha}
\bibliography{references}

@article {MR3890963,
    AUTHOR = {B\'erard, Pierre and Helffer, Bernard},
     TITLE = {On {C}ourant's nodal domain property for linear combinations
              of eigenfunctions. {P}art 1},
   JOURNAL = {Doc. Math.},
  FJOURNAL = {Documenta Mathematica},
    VOLUME = {23},
      YEAR = {2018},
     PAGES = {1561--1585},
      ISSN = {1431-0635,1431-0643},
   MRCLASS = {35P05 (35J05)},
  MRNUMBER = {3890963},
MRREVIEWER = {Dario\ Mazzoleni},
}

@article {MR4060310,
    AUTHOR = {B\'erard, Pierre and Helffer, Bernard},
     TITLE = {Sturm's theorem on the zeros of sums of eigenfunctions:
              {G}elfand's strategy implemented},
   JOURNAL = {Mosc. Math. J.},
  FJOURNAL = {Moscow Mathematical Journal},
    VOLUME = {20},
      YEAR = {2020},
    NUMBER = {1},
     PAGES = {1--25},
      ISSN = {1609-3321,1609-4514},
   MRCLASS = {34C10 (34L05 58J50)},
  MRNUMBER = {4060310},
MRREVIEWER = {Namig\ J.\ Guliyev},
       DOI = {10.17323/1609-4514-2020-20-1-1-25},
       URL = {https://doi.org/10.17323/1609-4514-2020-20-1-1-25},
}

@article {MR4082305,
    AUTHOR = {B\'erard, Pierre and Helffer, Bernard},
     TITLE = {Sturm's theorem on zeros of linear combinations of
              eigenfunctions},
   JOURNAL = {Expo. Math.},
  FJOURNAL = {Expositiones Mathematicae},
    VOLUME = {38},
      YEAR = {2020},
    NUMBER = {1},
     PAGES = {27--50},
      ISSN = {0723-0869,1878-0792},
   MRCLASS = {34-03 (01A55 34B24 34C10 34L10)},
  MRNUMBER = {4082305},
       DOI = {10.1016/j.exmath.2018.10.002},
       URL = {https://doi.org/10.1016/j.exmath.2018.10.002},
}

@incollection {MR4281621,
    AUTHOR = {B\'erard, Pierre and Helffer, Bernard},
     TITLE = {On {C}ourant's nodal domain property for linear combinations
              of eigenfunctions part {II}},
 BOOKTITLE = {Schr\"odinger operators, spectral analysis and number theory},
    SERIES = {Springer Proc. Math. Stat.},
    VOLUME = {348},
     PAGES = {47--88},
 PUBLISHER = {Springer, Cham},
      YEAR = {[2021] \copyright 2021},
      ISBN = {978-3-030-68489-1; 978-3-030-68490-7},
   MRCLASS = {35P05 (58J50)},
  MRNUMBER = {4281621},
       DOI = {10.1007/978-3-030-68490-7\_4},
       URL = {https://doi.org/10.1007/978-3-030-68490-7_4},
}

@article {MR4467996,
    AUTHOR = {B\'erard, Pierre and Charron, Philippe and Helffer, Bernard},
     TITLE = {Non-boundedness of the number of super level domains of
              eigenfunctions},
   JOURNAL = {J. Anal. Math.},
  FJOURNAL = {Journal d'Analyse Math\'ematique},
    VOLUME = {146},
      YEAR = {2022},
    NUMBER = {1},
     PAGES = {127--164},
      ISSN = {0021-7670,1565-8538},
   MRCLASS = {58J50 (35P15)},
  MRNUMBER = {4467996},
MRREVIEWER = {Yawei\ Chu},
       DOI = {10.1007/s11854-021-0189-9},
       URL = {https://doi.org/10.1007/s11854-021-0189-9},
}

@article{Boggio1901,
  author  = {Boggio, T.},
  title   = {Sull'equilibrio delle piastre elastiche incastrate},
  journal = {Rend. Acc. Lincei},
  volume  = {10},
  year    = {1901},
  pages   = {197--205}
}

@article{Boggio1905,
  author  = {Boggio, T.},
  title   = {Sulle funzioni di {Green} d'ordine \(m\)},
  journal = {Rend. Circ. Mat. Palermo},
  volume  = {20},
  year    = {1905},
  pages   = {97--135}
}

@article {MR4190398,
    AUTHOR = {Buhovsky, Lev and Logunov, Alexander and Sodin, Mikhail},
     TITLE = {Eigenfunctions with infinitely many isolated critical points},
   JOURNAL = {Int. Math. Res. Not. IMRN},
  FJOURNAL = {International Mathematics Research Notices. IMRN},
      YEAR = {2020},
    NUMBER = {24},
     PAGES = {10100--10113},
      ISSN = {1073-7928,1687-0247},
   MRCLASS = {58J50},
  MRNUMBER = {4190398},
MRREVIEWER = {Akira\ Asada},
       DOI = {10.1093/imrn/rnz181},
       URL = {https://doi.org/10.1093/imrn/rnz181},
}

@article {MR4704545,
    AUTHOR = {Chanillo, Sagun and Logunov, Alexander and Malinnikova,
              Eugenia and Mangoubi, Dan},
     TITLE = {Local version of {C}ourant's nodal domain theorem},
   JOURNAL = {J. Differential Geom.},
  FJOURNAL = {Journal of Differential Geometry},
    VOLUME = {126},
      YEAR = {2024},
    NUMBER = {1},
     PAGES = {49--63},
      ISSN = {0022-040X,1945-743X},
   MRCLASS = {58J50 (31C12 35J15 35P10 35R01)},
  MRNUMBER = {4704545},
MRREVIEWER = {Emmanuel\ Schenck},
       DOI = {10.4310/jdg/1707767334},
       URL = {https://doi.org/10.4310/jdg/1707767334},
}

@article {MR397805,
    AUTHOR = {Cheng, Shiu Yuen},
     TITLE = {Eigenfunctions and nodal sets},
   JOURNAL = {Comment. Math. Helv.},
  FJOURNAL = {Commentarii Mathematici Helvetici},
    VOLUME = {51},
      YEAR = {1976},
    NUMBER = {1},
     PAGES = {43--55},
      ISSN = {0010-2571,1420-8946},
   MRCLASS = {58G99 (35P15)},
  MRNUMBER = {397805},
MRREVIEWER = {Sh\^ukichi\ Tanno},
       DOI = {10.1007/BF02568142},
       URL = {https://doi.org/10.1007/BF02568142},
}

@book {MR65391,
    AUTHOR = {Courant, R. and Hilbert, D.},
     TITLE = {Methods of mathematical physics. {V}ol. {I}},
 PUBLISHER = {Interscience Publishers, Inc., New York},
      YEAR = {1953},
     PAGES = {xv+561},
   MRCLASS = {79.0X},
  MRNUMBER = {65391},
MRREVIEWER = {J.\ B.\ Diaz},
}

@article {MR1205487,
    AUTHOR = {Donnelly, Harold},
     TITLE = {Nodal sets for sums of eigenfunctions on {R}iemannian
              manifolds},
   JOURNAL = {Proc. Amer. Math. Soc.},
  FJOURNAL = {Proceedings of the American Mathematical Society},
    VOLUME = {121},
      YEAR = {1994},
    NUMBER = {3},
     PAGES = {967--973},
      ISSN = {0002-9939,1088-6826},
   MRCLASS = {58G25},
  MRNUMBER = {1205487},
MRREVIEWER = {Stig\ I.\ Andersson},
       DOI = {10.2307/2160300},
       URL = {https://doi.org/10.2307/2160300},
}

@article {MR29021,
    AUTHOR = {Duffin, R. J.},
     TITLE = {On a question of {H}adamard concerning super-biharmonic
              functions},
   JOURNAL = {J. Math. Physics},
  FJOURNAL = {Journal of Mathematics and Physics},
    VOLUME = {27},
      YEAR = {1949},
     PAGES = {253--258},
      ISSN = {0097-1421},
   MRCLASS = {31.0X},
  MRNUMBER = {29021},
MRREVIEWER = {M.\ Brelot},
}

@article {MR46440,
    AUTHOR = {Garabedian, P. R.},
     TITLE = {A partial differential equation arising in conformal mapping},
   JOURNAL = {Pacific J. Math.},
  FJOURNAL = {Pacific Journal of Mathematics},
    VOLUME = {1},
      YEAR = {1951},
     PAGES = {485--524},
      ISSN = {0030-8730,1945-5844},
   MRCLASS = {30.0X},
  MRNUMBER = {46440},
MRREVIEWER = {Z.\ Nehari},
       URL = {http://projecteuclid.org/euclid.pjm/1103052020},
}

@unpublished{georgiev-serrano-koirala-logunov-inprep,
  author = {Javier Gómez-Serrano and Robert Koirala and Alexander Logunov},
  title = {Nested nodal loops of biharmonic functions},
  note = {preprint},
  year = {2026}
}

@article {MR2591975,
    AUTHOR = {Grunau, Hans-Christoph and Robert, Fr\'ed\'eric},
     TITLE = {Positivity and almost positivity of biharmonic {G}reen's
              functions under {D}irichlet boundary conditions},
   JOURNAL = {Arch. Ration. Mech. Anal.},
  FJOURNAL = {Archive for Rational Mechanics and Analysis},
    VOLUME = {195},
      YEAR = {2010},
    NUMBER = {3},
     PAGES = {865--898},
      ISSN = {0003-9527,1432-0673},
   MRCLASS = {35J40 (31B30 35A08)},
  MRNUMBER = {2591975},
MRREVIEWER = {Ognyan\ Kounchev},
       DOI = {10.1007/s00205-009-0230-0},
       URL = {https://doi.org/10.1007/s00205-009-0230-0},
}

@incollection{Hadamard1908,
  author    = {Hadamard, J.},
  title     = {M{\'e}moire sur le probl{\`e}me d'analyse relatif {\`a}
               l'{\'e}quilibre des plaques {\'e}lastiques encastr{\'e}es},
  booktitle = {{\OE}uvres de {Jacques Hadamard}, Tome II},
  publisher = {CNRS},
  address   = {Paris},
  year      = {1968},
  pages     = {515--641},
  note      = {Original memoir, 1908}
}

@article {MR1696190,
    AUTHOR = {Hedenmalm, H{\aa}kan and Jakobsson, Stefan and Shimorin,
              Sergei},
     TITLE = {A maximum principle \`a{} la {H}adamard for biharmonic
              operators with applications to the {B}ergman spaces},
   JOURNAL = {C. R. Acad. Sci. Paris S\'er. I Math.},
  FJOURNAL = {Comptes Rendus de l'Acad\'emie des Sciences. S\'erie I.
              Math\'ematique},
    VOLUME = {328},
      YEAR = {1999},
    NUMBER = {11},
     PAGES = {973--978},
      ISSN = {0764-4442},
   MRCLASS = {31A30 (35B50 46E15)},
  MRNUMBER = {1696190},
MRREVIEWER = {Alain\ Brillard},
       DOI = {10.1016/S0764-4442(99)80308-5},
       URL = {https://doi.org/10.1016/S0764-4442(99)80308-5},
}

@article{hallgren2025structure,
  title={Structure Theory of Parabolic Nodal and Singular Sets},
  author={Hallgren, Max and Koirala, Robert and Ma, Zilu},
  journal={arXiv preprint arXiv:2511.11570},
  year={2025},
  pages={1-116}
}

@article {MR454051,
    AUTHOR = {Hardt, Robert M.},
     TITLE = {Triangulation of subanalytic sets and proper light subanalytic
              maps},
   JOURNAL = {Invent. Math.},
  FJOURNAL = {Inventiones Mathematicae},
    VOLUME = {38},
      YEAR = {1976/77},
    NUMBER = {3},
     PAGES = {207--217},
      ISSN = {0020-9910,1432-1297},
   MRCLASS = {32B20},
  MRNUMBER = {454051},
MRREVIEWER = {S.\ \L ojasiewicz},
       DOI = {10.1007/BF01403128},
       URL = {https://doi.org/10.1007/BF01403128},
}

@article {MR564475,
    AUTHOR = {Hardt, Robert M.},
     TITLE = {Semi-algebraic local-triviality in semi-algebraic mappings},
   JOURNAL = {Amer. J. Math.},
  FJOURNAL = {American Journal of Mathematics},
    VOLUME = {102},
      YEAR = {1980},
    NUMBER = {2},
     PAGES = {291--302},
      ISSN = {0002-9327,1080-6377},
   MRCLASS = {32B20 (32C05)},
  MRNUMBER = {564475},
MRREVIEWER = {Margherita\ Galbiati},
       DOI = {10.2307/2374240},
       URL = {https://doi.org/10.2307/2374240},
}

@article{lin-liu2016-betti-numbers-of-level-sets,
    AUTHOR = {Lin, Fanghua and Liu, Dan},
     TITLE = {On the {B}etti numbers of level sets of solutions to elliptic
              equations},
   JOURNAL = {Discrete Contin. Dyn. Syst.},
  FJOURNAL = {Discrete and Continuous Dynamical Systems. Series A},
    VOLUME = {36},
      YEAR = {2016},
    NUMBER = {8},
     PAGES = {4517--4529},
      ISSN = {1078-0947,1553-5231},
   MRCLASS = {35J25 (35B05 35B35 82D55)},
  MRNUMBER = {3479524},
MRREVIEWER = {Siegfried\ Carl},
       DOI = {10.3934/dcds.2016.36.4517},
       URL = {https://doi.org/10.3934/dcds.2016.36.4517},
}

@article {MR1267051,
    AUTHOR = {Shapiro, Harold S. and Tegmark, Max},
     TITLE = {An elementary proof that the biharmonic {G}reen function of an
              eccentric ellipse changes sign},
   JOURNAL = {SIAM Rev.},
  FJOURNAL = {SIAM Review. A Publication of the Society for Industrial and
              Applied Mathematics},
    VOLUME = {36},
      YEAR = {1994},
    NUMBER = {1},
     PAGES = {99--101},
      ISSN = {1095-7200},
   MRCLASS = {35J40},
  MRNUMBER = {1267051},
MRREVIEWER = {Yehuda\ Pinchover},
       DOI = {10.1137/1036005},
       URL = {https://doi.org/10.1137/1036005},
}

\end{document}